\NewDocumentCommand{\eulerian}{omm}
 {%
  \genfrac<>{0pt}{}{#2}{#3}%
  \IfValueT{#1}{_{\!#1}}%
 }
\newcommand*\pFq[6][8]{%
  \begingroup 
  \pFqmuskip=#1mu\relax
  \mathchardef\normalcomma=\mathcode`,
  \mathcode`\,=\string"8000
  \begingroup\lccode`\~=`\,
  \lowercase{\endgroup\let~}\pFqcomma
  {}_{#2}F_{#3}{\left(\genfrac..{0pt}{}{#4}{#5}\bigg|#6\right)}%
  \endgroup
}
\newcommand{\pFqcomma}{{\normalcomma}\mskip\pFqmuskip}
\newtheorem{theorem}{Theorem}
\newtheorem{lemma}[theorem]{Lemma}
\newtheorem{corollary}[theorem]{Corollary}
\newtheorem{proposition}[theorem]{Proposition}
\newtheorem{remark}[theorem]{Remark}
\begin{document}

\title[Some new properties on degenerate Bell polynomials]{Some new properties on degenerate Bell polynomials}

\author{Taekyun  Kim}
\address{Department of Mathematics, Kwangwoon University, Seoul 139-701, Republic of Korea}
\email{tkkim@kw.ac.kr}

\author{DAE SAN KIM}
\address{Department of Mathematics, Sogang University, Seoul 121-742, Republic of Korea}
\email{dskim@sogang.ac.kr}

\author{Hyunseok Lee}
\address{Department of Mathematics, Kwangwoon University, Seoul 139-701, Republic of Korea}
\email{luciasconstant@kw.ac.kr}

\author{SeongHo Park}
\address{Department of Mathematics, Kwangwoon University, Seoul 139-701, Republic of Korea}
\email{abcd2938471@kw.ac.kr}

\subjclass[2010]{11B73; 11B83}
\keywords{degenerate Bell polynomials; degenerate exponential functions; degenerate Stirling numbers of the first kind; degenerate Stirling numbers of the second kind}

\maketitle

\begin{abstract}
The aim of this paper is to study the degenerate Bell numbers and polynomials which are degenerate versions of the Bell numbers and polynomials. We derive some new identities and properties of those numbers and polynomials that are associated with the degenerate Stirling numbers of both kinds.
\end{abstract}

\section{Introduction}
The Bell number $B_n$ counts  the number of partitions of a set with $n$ elements into disjoint nonempty subsets. The Bell polynomials, also called Touchard or exponential polynomials, are natural extensions of Bell numbers. As a degenerate version of these Bell polynomials and numbers, the degenerate Bell polynomials and numbers (see \eqref{12}) are introduced and studied under the different names of the partially degenerate Bell polynomials and numbers in [12].\par
In [5], Carlitz initiated the exploration of degenerate Bernoulli and Euler polynomials, which are degenerate versions of the ordinary Bernoulli and Euler polynomials. Along the same line as Carlitz's pioneering work, intensive studies have been done for degenerate versions of quite a few special polynomials and numbers (see [1, 5, 7--13] and the references therein). It is worthwhile to mention that these studies of degenerate versions have been done not only for some special numbers and polynomials but also for transcendental functions like gamma functions (see [10]). The studies have been carried out by various means like combinatorial methods, generating functions, differential equations, umbral calculus techniques, $p$-adic analysis and probability theory. \par
The aim of this paper is to further investigate the degenerate Bell polynomials and numbers by means of generating functions. In more detail, we derive several properties and identities of those numbers and polynomials which include recurrence relations for degenerate Bell polynomials (see Theorems 2, 4, 5, 12), expressions for them that can be derived from repeated applications of certain operators to the exponential functions (see Theorem 3, Proposition 10), the derivatives of them (Corollary 7), the antiderivatives of them (see Theorem 9), and some identities involving them (see Theorems 8. 13). For the rest of this section, we recall some necessary facts that are needed throughout this paper.

 \vspace{0.1in}

For any $\lambda\in\mathbb{R}$, the degenerate exponential functions are defined by 
\begin{equation}
e_{\lambda}^{x}(t)=\sum_{k=0}^{\infty}\frac{(x)_{k,\lambda}}{k!}t^{k},\quad (\mathrm{see}\ [11]),\label{1}
\end{equation}
where
\begin{equation}
(x)_{0,\lambda}=1, (x)_{n,\lambda}=x(x-\lambda)\cdots(x-(n-1)\lambda),\ (n\ge 1).\label{2}
\end{equation}
When $x=1$, we see use the notation $e_{\lambda}(t)=e^{1}_{\lambda}(t)$.  \par 
In [5], Carlitz introduced the degenerate Bernoulli numbers given by 
\begin{equation}
\frac{t}{e_{\lambda}(t)-1}=\sum_{n=0}^{\infty}\beta_{n,\lambda}\frac{t^{n}}{n!}. 	\label{3}
\end{equation}
Note that $\lim_{\lambda\rightarrow 0}\beta_{n,\lambda}=B_{n}$, where $B_{n}$ are the ordinary Bernoulli numbers given by 
\begin{equation}
\frac{t}{e^{t}-1}=\sum_{n=0}^{\infty}B_{n}\frac{t^{n}}{n!},\quad(\mathrm{see}\ [1-14]).\label{4}
\end{equation} \par
It is well known that the Stirling numbers of the first kind are defined by 
\begin{equation}
(x)_{n}=\sum_{k=0}^{n}S_{1}(n,k)x^{k}, \quad(\mathrm{see}\ [14]),\label{5}
\end{equation}
where $(x)_{0}=1, (x)_{n}=x(x-1)\cdots(x-n+1),\ (n\ge 1)$. \\
As the inversion formula of \eqref{5}, the Stirling numbers of the second kind are given by 
\begin{equation}
x^{n}=\sum_{k=0}^{n}S_{2}(n,k)(x)_{k},\quad (\mathrm{see}\ [14]).\label{6}
\end{equation} \par
Recently, the degenerate Stirling numbers of the first kind are defined by 
\begin{equation}
(x)_{n}=\sum_{k=0}^{n}S_{1,\lambda}(n,k)(x)_{k,\lambda} ,\quad(\mathrm{see}\ [7]),\label{7}
\end{equation}
and the degenerate Stirling numbers of the second kind are given by 
\begin{equation}
(x)_{n,\lambda}=\sum_{k=0}^{n}S_{2,\lambda}(n,k)(x)_{k}, \quad(\mathrm{see}\ [7,9]).\label{8}
\end{equation}
We also recall the degenerate absolute Stirling numbers of the first kind that are defined by 
\begin{equation}
\langle x\rangle_{n}=\sum_{k=0}^{n}{n \brack k}_{\lambda}\langle x\rangle_{k,\lambda},\quad(\mathrm{see}\ [13]),\label{9}	
\end{equation}
where 
\begin{align*}
&\langle x\rangle_{0}=1,\quad \langle x\rangle_{n}=x(x+1)\cdots(x+n-1),\ (n\ge 1),  \\
&\langle x\rangle_{0,\lambda}=1,\quad \langle x\rangle_{n,\lambda} =x(x+\lambda)(x+2\lambda)\cdots\big(x+(n-1)\lambda
\big),\ (n\ge 1). 
\end{align*} \par
It is well known that the Bell polynomials are defined by  
\begin{equation}
e^{x(e^{t}-1)}=\sum_{n=0}^{\infty}\mathrm{Bel}_{n}(x)\frac{t^{n}}{n!},\quad (\mathrm{see}\ [3,4]).\label{10}
\end{equation}
When $x=1,\ \mathrm{Bel}_{n}= \mathrm{Bel}_{n}(1)$ are called the Bell numbers. \\
From \eqref{8}, we note that 
\begin{equation}
\mathrm{Bel}_{n}(x)=\sum_{k=0}^{n}S_{2}(n,k)x^{k},\quad(\mathrm{see}\ [3,4]).\label{11}
\end{equation} \par
In [12], the degenerate Bell polynomials are defined by 
\begin{equation}
e^{x(e_{\lambda}(t)-1)}=\sum_{n=0}^{\infty} \mathrm{Bel}_{n,\lambda}(x)\frac{t^{n}}{n!}. \label{12}
\end{equation}
Note that $\lim_{\lambda\rightarrow 0} \mathrm{Bel}_{n,\lambda}(x)=\mathrm{Bel}_{n}(x)$. For $x=1$, $\mathrm{Bel}_{n,\lambda}=\mathrm{Bel}_{n,\lambda}(1)$ are called the degenerate Bell numbers. \par 
The compositional inverse of $e_{\lambda}(t)$ is given by $\log_{\lambda}(t)$, namely $e_{\lambda}(\log_{\lambda}(t))=t=\log_{\lambda}(e_{\lambda}(t))$,\\
where
\begin{equation}
\log_{\lambda}(1+t)=\frac{1}{\lambda}\big((1+t)^{\lambda}-1\big)=\sum_{n=1}^{\infty}\lambda^{n-1}(1)_{n,\frac{1}{\lambda}}\frac{t^{n}}{n!},\quad(\mathrm{see}\ [7]).\label{13} 
\end{equation}
Note that $\displaystyle\lim_{\lambda\rightarrow 0}\log_{\lambda}(1+t)=\log(1+t)\displaystyle$.\par 
From \eqref{12}, we note that 
\begin{align}
\mathrm{Bel}_{n,\lambda}(x)\ =\ e^{-x}\sum_{k=0}^{\infty}\frac{(k)_{n,\lambda}}{k!}x^{k} 
=\ \sum_{k=0}^{n}S_{2,\lambda}(n,k)x^{k},\quad (\mathrm{see}\ [12]). \label{14}
\end{align}

\section{Some new properties on degenerate Bell polynomials}

Let $a$ be a nonzero constant. First, we observe that 
\begin{align}
\frac{d^{n}}{dt^{n}}e^{a(e_{\lambda}(t))}&=\frac{d^{n}}{dt^{n}}\sum_{k=0}^{\infty}\frac{a^{k}}{k!}e_{\lambda}^{k}(t)=\sum_{k=0}^{\infty}\frac{a^{k}}{k!}(k)_{n,\lambda}e_{\lambda}^{k-n\lambda}(t)\label{15} \\
&=\sum_{k=0}^{\infty}\frac{(k)_{n,\lambda}}{k!}a^{k}e_{\lambda}^{k}(t)\frac{1}{(1+\lambda t)^{n}} \nonumber \\
&=\bigg(\sum_{k=0}^{\infty}\frac{(k)_{n,\lambda}}{k!}\big(ae_{\lambda}(t)\big)^{k}e^{-ae_{\lambda}(t)}\bigg)e^{ae_{\lambda}(t)}\frac{1}{(1+\lambda t)^{n}}\nonumber \\
&= \frac{1}{(1+\lambda t)^{n}}\mathrm{Bel}_{n,\lambda}\big(ae_{\lambda}(t)\big)e^{ae_{\lambda}(t)}.\nonumber
\end{align}
Therefore, by \eqref{15}, we obtain the following lemma. 
\begin{lemma}
For $n\ge 0$, we have 
\begin{displaymath}
	\frac{d^{n}}{dt^{n}}e^{a(e_{\lambda}(t))} = \frac{1}{(1+\lambda t)^{n}}\mathrm{Bel}_{n,\lambda}\big(ae_{\lambda}(t)\big)e^{ae_{\lambda}(t)}.
\end{displaymath}
\end{lemma}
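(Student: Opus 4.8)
The plan is to compute the $n$-th derivative directly from the power-series definition of the outer exponential and then repackage the resulting sum by means of the series representation \eqref{14} of $\mathrm{Bel}_{n,\lambda}$. First I would write $e^{ae_{\lambda}(t)}=\sum_{k=0}^{\infty}\frac{a^{k}}{k!}\big(e_{\lambda}(t)\big)^{k}$ and differentiate term by term, so that the whole problem collapses to understanding the single quantity $\frac{d^{n}}{dt^{n}}\big(e_{\lambda}(t)\big)^{k}$.

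The technical heart is the differentiation rule $\frac{d^{n}}{dt^{n}}\big(e_{\lambda}(t)\big)^{k}=(k)_{n,\lambda}\big(e_{\lambda}(t)\big)^{k-n\lambda}$. I would establish this by iterating the power rule $n$ times (formally, induction on $n$): since $e_{\lambda}^{x}(t)=(1+\lambda t)^{x/\lambda}$, a direct consequence of \eqref{1} and \eqref{2}, one differentiation of $(1+\lambda t)^{k/\lambda}$ lowers the exponent parameter by $\lambda$ and produces the factor $k$, and repeating this builds up the falling-factorial coefficient $(k)_{n,\lambda}=k(k-\lambda)\cdots\big(k-(n-1)\lambda\big)$. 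Substituting back turns the differentiated series into $\sum_{k}\frac{(k)_{n,\lambda}}{k!}a^{k}\big(e_{\lambda}(t)\big)^{k-n\lambda}$. The crucial simplification is that the shifted exponent factors cleanly as $\big(e_{\lambda}(t)\big)^{k-n\lambda}=(1+\lambda t)^{-n}\big(e_{\lambda}(t)\big)^{k}$, so that the common factor $(1+\lambda t)^{-n}$ is independent of $k$ and can be pulled out of the sum.

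What remains inside is $\sum_{k}\frac{(k)_{n,\lambda}}{k!}\big(ae_{\lambda}(t)\big)^{k}$, and the final step is to recognize this via \eqref{14}: multiplying and dividing by $e^{ae_{\lambda}(t)}$ and setting $x=ae_{\lambda}(t)$ converts the sum into $\mathrm{Bel}_{n,\lambda}\big(ae_{\lambda}(t)\big)e^{ae_{\lambda}(t)}$, which is exactly the claimed right-hand side. I expect the main obstacle to be not any single calculation but seeing that the $\lambda$-shift in the exponent is precisely what manufactures the prefactor $(1+\lambda t)^{-n}$ while leaving behind exactly the sum that \eqref{14} identifies as a degenerate Bell polynomial; once the term-by-term differentiation is justified by the local (absolute) convergence of the series near $t=0$, everything else is bookkeeping.
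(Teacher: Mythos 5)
Your proposal is correct and follows essentially the same route as the paper: expand $e^{ae_{\lambda}(t)}$ as a power series, differentiate term by term using $\frac{d^{n}}{dt^{n}}e_{\lambda}^{k}(t)=(k)_{n,\lambda}e_{\lambda}^{k-n\lambda}(t)$, pull out the factor $(1+\lambda t)^{-n}$ coming from $e_{\lambda}^{-n\lambda}(t)$, and identify the remaining sum as $\mathrm{Bel}_{n,\lambda}\big(ae_{\lambda}(t)\big)e^{ae_{\lambda}(t)}$ via \eqref{14}. The only difference is that you spell out the induction behind the differentiation rule, which the paper takes for granted.
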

Let $x=e_{\lambda}(t)$ in \eqref{15}. Then we have 
\begin{equation}
\frac {d}{dt}=\frac{dx}{dt}\frac{d}{dx}=\frac{1}{1+\lambda t}e_{\lambda}(t)\frac{d}{dx}=x^{1-\lambda}\frac{d}{dx}. \label{16}
\end{equation}
By Lemma 1 and \eqref{16}, we get 
\begin{equation}
\bigg(x^{1-\lambda}\frac{d}{dx}\bigg)^{n}e^{ax}=x^{-n\lambda}\mathrm{Bel}_{n,\lambda}(ax)e^{ax},\quad (n\ge 0).\label{17}	
\end{equation} \par
Let 
\begin{equation}
S_{n,\lambda}=\sum_{k=0}^{\infty}\frac{(k)_{n,\lambda}}{k!},\quad n=0,1,2,\dots.\label{18}
\end{equation}
Then we note from \eqref{14} that we have
\begin{align}
e\mathrm{Bel}_{n,\lambda}=S_{n,\lambda}.\label{19}
\end{align}
The generating function of $S_{n,\lambda}$ is given by 
\begin{equation*}
e^{e_{\lambda}(t)}=\sum_{n=0}^{\infty}S_{n,\lambda}\frac{t^{n}}{n!}. 
\end{equation*}
Indeed, this can be seen from the following:
\begin{align}\label{20}	
\sum_{n=0}^{\infty}S_{n,\lambda}\frac{t^{n}}{n!}=e \sum_{n=0}^{\infty}\mathrm{Bel}_{n,\lambda}(1)\frac{t^{n}}{n!}
=ee^{e_{\lambda}(t)-1}=e^{e_{\lambda}(t)}.
\end{align} \par
Taking the derivative with respect to $t$ on both sides of \eqref{20}, we have 
\begin{align}
\sum_{n=0}^{\infty}S_{n+1,\lambda}\frac{t^{n}}{n!}&=\frac{d}{dt}e^{e_{\lambda}(t)}=e_{\lambda}^{1-\lambda}(t)e^{e_{\lambda}(t)}
=\sum_{l=0}^{\infty}(1-\lambda)_{l,\lambda}\frac{t^{l}}{l!}\sum_{m=0}^{\infty}S_{m,\lambda}\frac{t^{m}}{m!}\label{21}\\
&=\sum_{n=0}^{\infty}\bigg(\sum_{m=0}^{n}\binom{n}{m}S_{m,\lambda}(1-\lambda)_{n-m,\lambda}\bigg)\frac{t^{n}}{n!} \nonumber  \\
&=\sum_{n=0}^{\infty}\bigg(\sum_{m=0}^{n}\binom{n}{m}S_{m,\lambda}(1)_{n-m+1,\lambda}\bigg)\frac{t^{n}}{n!}. \nonumber
\end{align}
Thus, by comparing the coefficients on both sides of \eqref{21} and from \eqref{19}, we obtain the following theorem. 
\begin{theorem}
For $n\ge 0$, we have 
\begin{displaymath}
\mathrm{Bel}_{n+1,\lambda}= \sum_{m=0}^{n}\binom{n}{m}\mathrm{Bel}_{m,\lambda}(1)_{n-m+1,\lambda}.
\end{displaymath}
\end{theorem}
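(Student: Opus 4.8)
The plan is to read the recurrence straight off the generating-function identity $e^{e_{\lambda}(t)}=\sum_{n=0}^{\infty}S_{n,\lambda}\frac{t^{n}}{n!}$ established in \eqref{20}, and then to convert the resulting relation among the $S_{n,\lambda}$ into one for the degenerate Bell numbers via the link $e\,\mathrm{Bel}_{n,\lambda}=S_{n,\lambda}$ of \eqref{19}. So I would differentiate \eqref{20} once in $t$ and match coefficients.

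First I would differentiate the left-hand series term by term and shift the summation index, which (the $n=0$ term dropping out) produces $\sum_{n=0}^{\infty}S_{n+1,\lambda}\frac{t^{n}}{n!}$. On the right, the chain rule gives $\frac{d}{dt}e^{e_{\lambda}(t)}=e_{\lambda}'(t)\,e^{e_{\lambda}(t)}$, so I need the derivative of the degenerate exponential; since $e_{\lambda}(t)=(1+\lambda t)^{1/\lambda}$, one computes $e_{\lambda}'(t)=(1+\lambda t)^{(1-\lambda)/\lambda}=e_{\lambda}^{1-\lambda}(t)$, the factor already displayed in \eqref{21}. Expanding $e_{\lambda}^{1-\lambda}(t)=\sum_{l=0}^{\infty}(1-\lambda)_{l,\lambda}\frac{t^{l}}{l!}$ by definition \eqref{1} and multiplying against $\sum_{m=0}^{\infty}S_{m,\lambda}\frac{t^{m}}{m!}$, the Cauchy product gives the coefficient of $\frac{t^{n}}{n!}$ as $\sum_{m=0}^{n}\binom{n}{m}S_{m,\lambda}(1-\lambda)_{n-m,\lambda}$.

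The only step that is a genuine verification rather than bookkeeping is the falling-factorial identity $(1-\lambda)_{n-m,\lambda}=(1)_{n-m+1,\lambda}$: from \eqref{2} one has $(1-\lambda)_{l,\lambda}=(1-\lambda)(1-2\lambda)\cdots(1-l\lambda)$ while $(1)_{l+1,\lambda}=1\cdot(1-\lambda)(1-2\lambda)\cdots(1-l\lambda)$, and these agree, so taking $l=n-m$ gives the claim. Comparing the coefficients of $\frac{t^{n}}{n!}$ then yields $S_{n+1,\lambda}=\sum_{m=0}^{n}\binom{n}{m}S_{m,\lambda}(1)_{n-m+1,\lambda}$; substituting $S_{k,\lambda}=e\,\mathrm{Bel}_{k,\lambda}$ from \eqref{19} and cancelling the common factor $e$ produces the stated recurrence. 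I expect no real obstacle here beyond care: the points to get right are the correct evaluation of $e_{\lambda}'(t)$ as $e_{\lambda}^{1-\lambda}(t)$, the index shift on the differentiated left-hand series, and the elementary factorial identity above; everything else is the routine Cauchy product and coefficient comparison.
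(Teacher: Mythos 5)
Your proposal is correct and follows the paper's own argument exactly: differentiating \eqref{20}, rewriting the derivative as $e_{\lambda}^{1-\lambda}(t)e^{e_{\lambda}(t)}$, forming the Cauchy product, using $(1-\lambda)_{n-m,\lambda}=(1)_{n-m+1,\lambda}$, and then passing from $S_{n,\lambda}$ to $\mathrm{Bel}_{n,\lambda}$ via \eqref{19} is precisely the computation in \eqref{21}. No gaps; the steps you flag as needing care are handled correctly.
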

Assume that  the following identity holds:
\begin{align*}
\bigg(x^{1-\lambda}\frac{d}{dx}\bigg)^{n}e^{x}=\sum_{k=0}^{\infty}\frac{(k)_{n,\lambda}}{k!}x^{k-n\lambda}.
\end{align*}
Then we have
\begin{align*}
&\bigg(x^{1-\lambda}\frac{d}{dx}\bigg)^{n+1}e^{x}=x^{1-\lambda}\frac{d}{dx}\sum_{k=0}^{\infty}\frac{(k)_{n,\lambda}}{k!}x^{k-n\lambda} \\
&=x^{1-\lambda}\sum_{k=0}^{\infty}\frac{(k)_{n,\lambda}}{k!}(k-n \lambda)x^{k-n\lambda-1}
=\sum_{k=0}^{\infty}\frac{(k)_{n+1,\lambda}}{k!}x^{k-(n+1)\lambda}.
\end{align*}
This together with \eqref{17} gives the next result.
\begin{theorem}
For $n\ge 0$, we have 	
\begin{equation}
\bigg(x^{1-\lambda}\frac{d}{dx}\bigg)^{n}e^{x}=\sum_{k=0}^{\infty}\frac{(k)_{n,\lambda}}{k!}x^{k-n\lambda}=x^{-n\lambda}\mathrm{Bel}_{n,\lambda}(x)e^{x}. \label{22}
\end{equation}
\end{theorem}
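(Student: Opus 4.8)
The plan is to split \eqref{22} into its two equalities and dispatch them separately. The right-most identity
\[
\sum_{k=0}^{\infty}\frac{(k)_{n,\lambda}}{k!}x^{k-n\lambda}=x^{-n\lambda}\mathrm{Bel}_{n,\lambda}(x)e^{x}
\]
is essentially a restatement of the definition \eqref{14}: pulling the factor $x^{-n\lambda}$ out of the series leaves $\sum_{k=0}^{\infty}\frac{(k)_{n,\lambda}}{k!}x^{k}$, which by \eqref{14} is exactly $e^{x}\mathrm{Bel}_{n,\lambda}(x)$. So no real work is hidden here, and the substance of the theorem is the first equality, relating the iterated operator $\big(x^{1-\lambda}\frac{d}{dx}\big)^{n}$ applied to $e^{x}$ with the explicit series. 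I would prove that by induction on $n$.

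For the base case $n=0$ the operator is the identity and $(k)_{0,\lambda}=1$, so both sides collapse to $e^{x}=\sum_{k=0}^{\infty}\frac{x^{k}}{k!}$. For the inductive step, I assume the identity at level $n$, apply one more copy of $x^{1-\lambda}\frac{d}{dx}$, and differentiate the power series term by term:
\[
x^{1-\lambda}\frac{d}{dx}\sum_{k=0}^{\infty}\frac{(k)_{n,\lambda}}{k!}x^{k-n\lambda}=\sum_{k=0}^{\infty}\frac{(k)_{n,\lambda}}{k!}(k-n\lambda)\,x^{k-n\lambda-\lambda}.
\]
The engine of the induction is the falling-factorial recurrence $(k)_{n,\lambda}(k-n\lambda)=(k)_{n+1,\lambda}$, which is immediate from the definition \eqref{2}, combined with the exponent bookkeeping $x^{1-\lambda}\cdot x^{k-n\lambda-1}=x^{k-(n+1)\lambda}$. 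Together these turn the right-hand side into $\sum_{k=0}^{\infty}\frac{(k)_{n+1,\lambda}}{k!}x^{k-(n+1)\lambda}$, which is precisely the claimed formula at level $n+1$, closing the induction.

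As a sanity check and an independent route, I would also note that specializing \eqref{17} to $a=1$ gives $\big(x^{1-\lambda}\frac{d}{dx}\big)^{n}e^{x}=x^{-n\lambda}\mathrm{Bel}_{n,\lambda}(x)e^{x}$ directly from Lemma 1 via the change of variables $x=e_{\lambda}(t)$, so the two derivations corroborate each other. The argument is otherwise routine; the only point demanding a word of care is the term-by-term differentiation of the series, which is justified on its region of convergence. The main (and genuinely mild) obstacle is keeping the two moving indices synchronized—the factorial depth $n$ and the exponent $k-n\lambda$—so that a single application of $x^{1-\lambda}\frac{d}{dx}$ advances both at once; once one sees that the factor $(k-n\lambda)$ supplied by differentiation is exactly what promotes $(k)_{n,\lambda}$ to $(k)_{n+1,\lambda}$ while $x^{1-\lambda}$ supplies the matching shift in the exponent, the computation falls out cleanly.
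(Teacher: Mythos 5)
Your proof is correct and follows essentially the same route as the paper: the paper also proves the series identity $\big(x^{1-\lambda}\frac{d}{dx}\big)^{n}e^{x}=\sum_{k=0}^{\infty}\frac{(k)_{n,\lambda}}{k!}x^{k-n\lambda}$ by induction on $n$, using exactly the recurrence $(k)_{n,\lambda}(k-n\lambda)=(k)_{n+1,\lambda}$, and then attaches the Bell-polynomial form via \eqref{17}. Your use of \eqref{14} instead of \eqref{17} for the final equality is an immaterial variation, which you yourself note is interchangeable with the paper's choice.
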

From the first equality in \eqref{22} and \eqref{18}, we see that we have
\begin{align}
S_{n,\lambda}=\bigg(x^{1-\lambda}\frac{d}{dx}\bigg)^{n}e^{x}\big|_{x=1}.\label{23}
\end{align}
Clearly, $S_{0,\lambda}=S_{1,\lambda}=e$. We can check that
\begin{align}
&\big(x^{1-\lambda}\frac{d}{dx}\big)^{2}e^{x}=
(1-\lambda)x^{1-2\lambda}e^{x}+x^{2-2\lambda}e^{x}, \label{24}\\
&\bigg(x^{1-\lambda}\frac{d}{dx}\bigg)^{3}e^{x}=x^{1-3\lambda}e^{x}\Big((1-\lambda)_{2,\lambda}+(1-\lambda)x\Big)+x^{2-3\lambda}e^{x}(2-2\lambda+x).\nonumber
\end{align}
From \eqref{23} and \eqref{24}, we have $S_{2,\lambda}=(2-\lambda)e,\quad S_{3,\lambda}=(2\lambda^{2}-6\lambda+5)e$.

\vspace{0.1in}

By taking $x\frac{d}{dx}$ in the second equality of \eqref{22}, on the one hand we have 
\begin{equation}
x\frac{d}{dx}\bigg(x^{-n\lambda}\mathrm{Bel}_{n,\lambda}(x)e^{x}\bigg)=\sum_{k=0}^{\infty}\frac{(k)_{n+1,\lambda}}{k!}x^{k-n\lambda}. \label{25}	
\end{equation}
On the other hand, we also have
\begin{align}\label{26}
x\frac{d}{dx}\Big(x^{-n\lambda}\mathrm{Bel}_{n,\lambda}(x)e^{x}\Big)=xx^{-n\lambda}\Big(\mathrm{Bel}_{n,\lambda}^{\prime}(x)+\mathrm{Bel}_{n,\lambda}(x)\Big)e^{x}-n\lambda x^{-n\lambda}\mathrm{Bel}_{n,\lambda}(x)e^{x},
\end{align}
where $\mathrm{Bel}^{\prime}_{n,\lambda}(x)=\frac{d}{dx}\mathrm{Bel}_{n,\lambda}(x)$. \\ 
From \eqref{25}, \eqref{26} and Theorem 3, we note that 
\begin{align}
\sum_{k=0}^{\infty}\frac{(k)_{n+1,\lambda}}{k!}x^{k}&=\bigg(\sum_{k=0}^{\infty}\frac{(k)_{n+1,\lambda}}{k!}x^{k}e^{-x}\bigg)e^{x}=\mathrm{Bel}_{n+1,\lambda}(x)e^{x}\label{27}\\
&=x\Big(\mathrm{Bel}_{n,\lambda}^{\prime}(x)+\mathrm{Bel}_{n,\lambda}(x)\Big)e^{x}-n\lambda\mathrm{Bel}_{n,\lambda}(x)e^{x}. \nonumber
\end{align}
Therefore, by \eqref{27} and Theorem 3, we obtain the following theorem. 
\begin{theorem}
For $n\ge 0$, we have 
\begin{displaymath}
\mathrm{Bel}_{n+1,\lambda}(x)=	x\Big(\mathrm{Bel}_{n,\lambda}^{\prime}(x)+\mathrm{Bel}_{n,\lambda}(x)\Big)-n\lambda\mathrm{Bel}_{n,\lambda}(x),
\end{displaymath}
where $\mathrm{Bel}^{\prime}_{n,\lambda}(x)=\frac{d}{dx}\mathrm{Bel}_{n,\lambda}(x)$.
\end{theorem}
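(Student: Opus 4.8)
The plan is to turn the operator identity of Theorem 3 into a first-order differential recurrence in the variable $x$ by applying the Euler operator $x\frac{d}{dx}$ exactly once. The natural starting point is the closed form already recorded in \eqref{14} (equivalently, the first equality of \eqref{22} multiplied through by $x^{n\lambda}$), namely
\begin{equation*}
\sum_{k=0}^{\infty}\frac{(k)_{n,\lambda}}{k!}x^{k}=\mathrm{Bel}_{n,\lambda}(x)e^{x}.
\end{equation*}
This exhibits $\mathrm{Bel}_{n,\lambda}(x)e^{x}$ as a single power series whose $k$-th coefficient carries the factor $(k)_{n,\lambda}$, with the index $n$ encoded entirely in that factor. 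I prefer this route over differentiating the generating function \eqref{12} directly in $t$, since the latter introduces the awkward factor $e_{\lambda}^{1-\lambda}(t)$ and produces a $t$-recurrence rather than the desired relation among $\mathrm{Bel}_{n+1,\lambda}(x)$, $\mathrm{Bel}_{n,\lambda}(x)$ and $\mathrm{Bel}_{n,\lambda}'(x)$.

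First I would apply $x\frac{d}{dx}$ to the right-hand side by the product rule, obtaining $x\big(\mathrm{Bel}_{n,\lambda}'(x)+\mathrm{Bel}_{n,\lambda}(x)\big)e^{x}$; this is precisely the combination appearing in the claimed identity, so the whole theorem will emerge from matching it against the left-hand side. Applying $x\frac{d}{dx}$ term-by-term to the series replaces $x^{k}$ by $k\,x^{k}$ and yields $\sum_{k}\frac{k\,(k)_{n,\lambda}}{k!}x^{k}$, where term-by-term differentiation is legitimate because the series is $e^{x}$ times a polynomial and hence converges locally uniformly.

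The key step is the degenerate falling-factorial recurrence $(k)_{n+1,\lambda}=(k-n\lambda)(k)_{n,\lambda}$, which I would rewrite as $k\,(k)_{n,\lambda}=(k)_{n+1,\lambda}+n\lambda\,(k)_{n,\lambda}$. Substituting this splits the differentiated series into $\sum_{k}\frac{(k)_{n+1,\lambda}}{k!}x^{k}+n\lambda\sum_{k}\frac{(k)_{n,\lambda}}{k!}x^{k}$, and invoking the same closed form once with index $n+1$ and once with index $n$ identifies these two sums as $\mathrm{Bel}_{n+1,\lambda}(x)e^{x}$ and $n\lambda\,\mathrm{Bel}_{n,\lambda}(x)e^{x}$. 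Equating the two evaluations of $x\frac{d}{dx}\big(\mathrm{Bel}_{n,\lambda}(x)e^{x}\big)$ and cancelling the common factor $e^{x}$ then gives the asserted recurrence. I expect no analytic difficulty here; the only genuine obstacle is the bookkeeping — correctly spotting the split $k\,(k)_{n,\lambda}=(k)_{n+1,\lambda}+n\lambda\,(k)_{n,\lambda}$ and verifying that the index-shifted closed form applies verbatim — after which the conclusion follows immediately.
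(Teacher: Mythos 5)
Your proposal is correct and is essentially the paper's own proof: the paper likewise applies the Euler operator $x\frac{d}{dx}$ to the closed form from Theorem 3 and compares the two evaluations (its \eqref{25}--\eqref{27}), using the same key identity $(k)_{n+1,\lambda}=(k-n\lambda)(k)_{n,\lambda}$. The only difference is bookkeeping: the paper keeps the factor $x^{-n\lambda}$ in place, so the $-n\lambda\,\mathrm{Bel}_{n,\lambda}(x)$ term arises from the product rule, whereas you multiply that factor out first and produce the same term by splitting $k=(k-n\lambda)+n\lambda$ in the series coefficients.
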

From \eqref{12}, we note that 
\begin{align}
\sum_{n=0}^{\infty}\frac{d}{dx}\mathrm{Bel}_{n,\lambda}(x)\frac{t^{n}}{n!}&=\frac{\partial}{\partial x}e^{x(e_{\lambda}(t)-1)}=(e_{\lambda}(t)-1)e^{x(e_{\lambda}(t)-1)}\label{28} \\
&=\bigg(\sum_{l=0}^{\infty}(1)_{l,\lambda}\frac{t^{l}}{l!}-1\bigg)\sum_{m=0}^{\infty}\mathrm{Bel}_{m,\lambda}(x)\frac{t^{m}}{m!}\nonumber \\
&=\sum_{n=0}^{\infty}\bigg(\sum_{m=0}^{n}\binom{n}{m}\mathrm{Bel}_{m,\lambda}(x)(1)_{n-m,\lambda}-\mathrm{Bel}_{n,\lambda}(x)\bigg)\frac{t^{n}}{n!}\nonumber \\
&=\sum_{n=0}^{\infty}\bigg(\sum_{m=0}^{n-1}\binom{n}{m}\mathrm{Bel}_{m,\lambda}(x)(1)_{n-m,\lambda}\bigg)\frac{t^{n}}{n!}.\nonumber
\end{align}
Thus, by comparing the coefficients on both sides of \eqref{28}, we get 
\begin{equation}
\frac{d}{dx}\mathrm{Bel}_{n,\lambda}(x)=\mathrm{Bel}^{\prime}_{n,\lambda}(x)=	\sum_{m=0}^{n-1}\binom{n}{m}\mathrm{Bel}_{m,\lambda}(x)(1)_{n-m,\lambda},\quad (n\ge 1).\label{29}
\end{equation} \par
Taking the derivative with respect to $t$ on both sides of \eqref{12}, we have 
\begin{equation}
\frac{d}{dt}e^{x(e_{\lambda}(t)-1)}=\sum_{n=0}^{\infty}\mathrm{Bel}_{n+1,\lambda}(x)\frac{t^{n}}{n!}. \label{30}
\end{equation}
On the other hand, 
\begin{align}
\frac{d}{dt}e^{x(e_{\lambda}(t)-1)}&=xe_{\lambda}^{1-\lambda}(t) e^{x(e_{\lambda}(t)-1)}=x \sum_{l=0}^{\infty}(1-\lambda)_{l,\lambda}\frac{t^{l}}{l!}\sum_{m=0}^{\infty}\mathrm{Bel}_{m,\lambda}(x)\frac{t^{m}}{m!} \label{31}\\
&=x\sum_{n=0}^{\infty}\bigg(\sum_{m=0}^{n}\binom{n}{m}\mathrm{Bel}_{n,\lambda}(x)(1-\lambda)_{n-m, \lambda}\bigg)\frac{t^{n}}{n!} \nonumber \\
&=\sum_{n=0}^{\infty}\bigg(x\sum_{m=0}^{n}\binom{n}{m}\mathrm{Bel}_{m,\lambda}(x)(1)_{n-m+1,\lambda}\bigg)\frac{t^{n}}{n!}. \nonumber
\end{align}
Therefore, by \eqref{30} and \eqref{31}, we obtain the following theorem. 
\begin{theorem}
For $n\ge 0$, we have 
\begin{displaymath}
\mathrm{Bel}_{n+1,\lambda}(x)= x\sum_{m=0}^{n}\binom{n}{m}\mathrm{Bel}_{m,\lambda}(x)(1)_{n-m+1,\lambda}.
\end{displaymath}	
\end{theorem}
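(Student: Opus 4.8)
The plan is to differentiate the defining generating function \eqref{12} with respect to $t$ in two different ways and to equate the resulting coefficients of $t^{n}/n!$. One evaluation is immediate: reindexing \eqref{12} shows that $\frac{d}{dt}e^{x(e_{\lambda}(t)-1)}=\sum_{n=0}^{\infty}\mathrm{Bel}_{n+1,\lambda}(x)\frac{t^{n}}{n!}$, which is exactly relation \eqref{30}. So the task reduces to expanding the same derivative a second way and reading off its coefficients.

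For the second evaluation I would first record the derivative of the inner degenerate exponential. Since $e_{\lambda}(t)=(1+\lambda t)^{1/\lambda}$, direct differentiation gives $\frac{d}{dt}e_{\lambda}(t)=(1+\lambda t)^{1/\lambda-1}=e_{\lambda}^{1-\lambda}(t)$, whence by the chain rule $\frac{d}{dt}e^{x(e_{\lambda}(t)-1)}=x\,e_{\lambda}^{1-\lambda}(t)\,e^{x(e_{\lambda}(t)-1)}$. Expanding $e_{\lambda}^{1-\lambda}(t)=\sum_{l=0}^{\infty}(1-\lambda)_{l,\lambda}\frac{t^{l}}{l!}$ by \eqref{1} and $e^{x(e_{\lambda}(t)-1)}=\sum_{m=0}^{\infty}\mathrm{Bel}_{m,\lambda}(x)\frac{t^{m}}{m!}$ by \eqref{12}, and then forming the Cauchy product, produces
\[
x\sum_{n=0}^{\infty}\left(\sum_{m=0}^{n}\binom{n}{m}\mathrm{Bel}_{m,\lambda}(x)(1-\lambda)_{n-m,\lambda}\right)\frac{t^{n}}{n!},
\]
which is the content of display \eqref{31}.

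The only nonroutine point is the bookkeeping identity $(1-\lambda)_{n-m,\lambda}=(1)_{n-m+1,\lambda}$, which I would verify directly from \eqref{2}: writing out $(1)_{n-m+1,\lambda}=1\cdot(1-\lambda)(1-2\lambda)\cdots\big(1-(n-m)\lambda\big)$, the leading factor is $1$ and the remaining $n-m$ factors are precisely $(1-\lambda)_{n-m,\lambda}$. Substituting this into the Cauchy product above and comparing coefficients of $t^{n}/n!$ against \eqref{30} yields the claimed recurrence. The main thing to watch is that the index ranges and the factorial normalizations in the Cauchy product line up so that the binomial coefficient $\binom{n}{m}$ appears with the correct weight; beyond that the argument is a purely formal manipulation of generating functions, already carried out in \eqref{30} and \eqref{31}, so I expect no real obstacle.
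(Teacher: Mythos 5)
Your proposal is correct and follows essentially the same route as the paper: the paper's own proof consists precisely of the two evaluations \eqref{30} and \eqref{31} of $\frac{d}{dt}e^{x(e_{\lambda}(t)-1)}$ followed by comparison of coefficients, including the rewriting $(1-\lambda)_{n-m,\lambda}=(1)_{n-m+1,\lambda}$ that you verify from \eqref{2}. No gaps; your explicit check of the derivative $\frac{d}{dt}e_{\lambda}(t)=e_{\lambda}^{1-\lambda}(t)$ is a detail the paper uses without comment.
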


\begin{remark}
Theorems 4 and 5, and \eqref{29} give us the following:
\begin{align*}
\mathrm{Bel}_{n+1,\lambda}(x)&=x\sum_{m=0}^{n}\binom{n}{m}\mathrm{Bel}_{m,\lambda}(x)(1)_{n-m,\lambda}-n\lambda\mathrm{Bel}_{n,\lambda}(x) \\
&=x\sum_{m=0}^{n}\binom{n}{m}\mathrm{Bel}_{m,\lambda}(x)(1)_{n-m,\lambda}\big(1-(n-m)\lambda \big).
\end{align*}
This implies that the following identity must hold true:
\begin{align*}
n\mathrm{Bel}_{n,\lambda}(x)= x\sum_{m=0}^{n}\binom{n}{m}(n-m)\mathrm{Bel}_{m,\lambda}(x)(1)_{n-m,\lambda},
\end{align*}
the validity of which follows from Theorem 5.
\end{remark}
From Theorem 4, we note that 
\begin{align}
x\mathrm{Bel}_{n,\lambda}^{\prime}(x)&=x\frac{d}{dx} \mathrm{Bel}_{n,\lambda}(x)= \mathrm{Bel}_{n+1,\lambda}(x)-x\mathrm{Bel}_{n,\lambda}(x)+n\lambda \mathrm{Bel}_{n,\lambda}(x)\label{32} \\
&= \mathrm{Bel}_{n+1,\lambda}(x)-(x-n\lambda) \mathrm{Bel}_{n,\lambda}(x) \nonumber \\
&=x\sum_{m=0}^{n-1}\binom{n}{m} \mathrm{Bel}_{m,\lambda}(x)(1)_{n+1-m,\lambda}+n\lambda	\mathrm{Bel}_{n,\lambda}(x).\nonumber
\end{align}
Therefore, by \eqref{32}, we obtain the following corollary. 
\begin{corollary}
For $n\ge 1$, we have 
\begin{displaymath}
x\frac{d}{dx} \mathrm{Bel}_{n,\lambda}(x)= x\sum_{m=0}^{n-1}\binom{n}{m} \mathrm{Bel}_{m,\lambda}(x)(1)_{n+1-m,\lambda}+n\lambda\mathrm{Bel}_{n,\lambda}(x).
\end{displaymath}
\end{corollary}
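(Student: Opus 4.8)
The plan is to obtain this identity as a direct algebraic rearrangement of the recurrences already in hand, rather than as a fresh computation. I would start from Theorem 4 and solve it for the derivative term: isolating $x\frac{d}{dx}\mathrm{Bel}_{n,\lambda}(x)$ gives $\mathrm{Bel}_{n+1,\lambda}(x) - x\,\mathrm{Bel}_{n,\lambda}(x) + n\lambda\,\mathrm{Bel}_{n,\lambda}(x)$. This already has the right overall shape, since the claimed right-hand side likewise consists of a sum of lower-order Bell polynomials plus the term $n\lambda\,\mathrm{Bel}_{n,\lambda}(x)$.

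The next step is to replace $\mathrm{Bel}_{n+1,\lambda}(x)$ by its expansion from Theorem 5, namely $x\sum_{m=0}^{n}\binom{n}{m}\mathrm{Bel}_{m,\lambda}(x)(1)_{n-m+1,\lambda}$. The heart of the argument is then to split off the top summand $m=n$: because $\binom{n}{n}=1$ and $(1)_{1,\lambda}=1$, that summand equals exactly $x\,\mathrm{Bel}_{n,\lambda}(x)$, and it cancels the $-x\,\mathrm{Bel}_{n,\lambda}(x)$ inherited from Theorem 4. After this cancellation the sum runs only over $m=0,\dots,n-1$, while the leftover $n\lambda\,\mathrm{Bel}_{n,\lambda}(x)$ survives unchanged, yielding precisely the stated formula. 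I expect this single cancellation---recognizing that the highest-index contribution of the Theorem 5 expansion is exactly the linear correction produced by Theorem 4---to be the only substantive point; the remainder is reindexing.

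Because the derivation is entirely formal, I anticipate no analytic or combinatorial obstacle. As an optional consistency check, one can re-derive the same identity from equation \eqref{29} together with the Remark: using $(1)_{n-m+1,\lambda} = (1)_{n-m,\lambda}\bigl(1-(n-m)\lambda\bigr)$ splits the reduced sum into the derivative expression \eqref{29} and a $\lambda$-weighted sum that the Remark identifies with $n\lambda\,\mathrm{Bel}_{n,\lambda}(x)$. This reproduces the result by an independent route and confirms internal consistency, though it is not needed for the proof proper.
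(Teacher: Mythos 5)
Your proposal is correct and matches the paper's own proof: the paper (equation (32)) likewise solves Theorem 4 for $x\frac{d}{dx}\mathrm{Bel}_{n,\lambda}(x)$, substitutes the expansion of $\mathrm{Bel}_{n+1,\lambda}(x)$ from Theorem 5, and cancels the $m=n$ summand $x\,\mathrm{Bel}_{n,\lambda}(x)(1)_{1,\lambda}$ against $-x\,\mathrm{Bel}_{n,\lambda}(x)$. Your write-up merely makes explicit the cancellation step that the paper leaves implicit in its final equality.
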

We observe that 
\begin{align}
&x^{1-\lambda}\frac{d}{dx}\bigg(x^{-n\lambda}\mathrm{Bel}_{n,\lambda}(x)e^{x}\bigg)=x^{1-\lambda}\frac{d}{dx}\bigg(x^{-n\lambda}\sum_{k=0}^{\infty}\frac{(k)_{n,\lambda}}{k!}x^{k}\bigg) \label{33}\\
&=\sum_{k=0}^{\infty}\frac{(k)_{n+1,\lambda}}{k!}x^{k-(n+1)\lambda}=x^{-(n+1)\lambda}\bigg(\sum_{k=0}^{\infty}\frac{(k)_{n+1,\lambda}}{k!}x^{k}e^{-x}\bigg)e^{x}\nonumber \\
&= x^{-(n+1)\lambda}\mathrm{Bel}_{n+1,\lambda}(x)e^{x},\quad (n\ge 0). \nonumber	
\end{align}
Thus, by \eqref{33}, we get 
\begin{equation}
x^{1-\lambda}\frac{d}{dx}\Big(x^{-n\lambda}\mathrm{Bel}_{n,\lambda}(x)e^{x}\bigg)=x^{-(n+1)\lambda}\mathrm{Bel}_{n+1,\lambda}(x)e^{x},\quad (n\ge 0).\label{34}
\end{equation} \par
From \eqref{12}, we have
\begin{align}
\sum_{n=0}^{\infty}\mathrm{Bel}_{n,\lambda}(x+y)\frac{t^{n}}{n!}&=	e^{(x+y)(e_{\lambda}(t)-1)}=e^{x(e_{\lambda}(t)-1)}\cdot e^{y(e_{\lambda}(t)-1)} \label{35}\\
&=\sum_{l=0}^{\infty}\mathrm{Bel}_{l,\lambda}(x)\frac{t^{l}}{l!}\sum_{m=0}^{\infty} \mathrm{Bel}_{m,\lambda}(x)\frac{t^{m}}{m!}\nonumber \\
&=\sum_{n=0}^{\infty}\bigg(\sum_{l=0}^{n}\binom{n}{l}\mathrm{Bel}_{l,\lambda}(x)\mathrm{Bel}_{n-l,\lambda}(y)\bigg)\frac{t^{n}}{n!}. \nonumber
\end{align}
Therefore, by comparing the coefficients on both sides of \eqref{35}, we obtain the following theorem. 
\begin{theorem}
For $n\ge 0$, we have 
\begin{displaymath}
\mathrm{Bel}_{n,\lambda}(x+y)= \sum_{l=0}^{n}\binom{n}{l}\mathrm{Bel}_{l,\lambda}(x)\mathrm{Bel}_{n-l,\lambda}(y).
\end{displaymath}	
\end{theorem}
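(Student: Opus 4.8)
The plan is to prove this addition theorem purely through generating functions, exploiting the fact that the ordinary exponential turns sums in the exponent into products. The crucial observation is that the exponent in \eqref{12} depends linearly on the argument, so that $(x+y)(e_{\lambda}(t)-1) = x(e_{\lambda}(t)-1) + y(e_{\lambda}(t)-1)$ and hence
\[
e^{(x+y)(e_{\lambda}(t)-1)} = e^{x(e_{\lambda}(t)-1)}\cdot e^{y(e_{\lambda}(t)-1)}.
\]
In other words, the identity to be proved is just the coefficient-level shadow of this factorization of the generating function; the $\mathrm{Bel}_{n,\lambda}$ will inherit a binomial-convolution structure precisely because \eqref{12} is an \emph{exponential} generating function.

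First I would rewrite the left-hand side directly from \eqref{12} as $\sum_{n=0}^{\infty}\mathrm{Bel}_{n,\lambda}(x+y)\,t^{n}/n!$. Next I would replace each of the two factors on the right by its own expansion from \eqref{12}, obtaining $\sum_{l=0}^{\infty}\mathrm{Bel}_{l,\lambda}(x)\,t^{l}/l!$ and $\sum_{m=0}^{\infty}\mathrm{Bel}_{m,\lambda}(y)\,t^{m}/m!$. I would then form the Cauchy product of these two series, collecting the coefficient of $t^{n}$ over all pairs $(l,m)$ with $l+m=n$, and use the elementary relation $\tfrac{1}{l!\,m!} = \tfrac{1}{n!}\binom{n}{l}$ for $l+m=n$ to recombine the two factorials into a single binomial coefficient, so that the product takes the form $\sum_{n=0}^{\infty}\bigl(\sum_{l=0}^{n}\binom{n}{l}\mathrm{Bel}_{l,\lambda}(x)\mathrm{Bel}_{n-l,\lambda}(y)\bigr)t^{n}/n!$.

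Finally, comparing the coefficients of $t^{n}/n!$ on the two sides yields the claimed formula at once. I do not expect any genuine obstacle here: the proof is a routine generating-function argument, and the only point demanding care is the binomial-convolution bookkeeping, namely verifying that the two denominators $l!$ and $m!$ coming from the separate exponential generating functions recombine correctly into $\binom{n}{l}/n!$. This is exactly the computation carried out in \eqref{35}.
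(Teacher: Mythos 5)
Your proposal is correct and coincides exactly with the paper's own argument in \eqref{35}: factor $e^{(x+y)(e_{\lambda}(t)-1)}$ as the product of the two exponential generating functions, take the Cauchy product, and compare coefficients of $t^{n}/n!$. Nothing further is needed.
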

From \eqref{12}, we note that 
\begin{equation}
\sum_{n=0}^{\infty}\int_{0}^{x}\mathrm{Bel}_{n,\lambda}(x)dx\frac{t^{n}}{n!}=\int_{0}^{x}e^{x(e_{\lambda}(t)-1)}dx.\label{36}	
\end{equation}
On the other hand, we also have
\begin{align}
&\int_{0}^{x}e^{x(e_{\lambda}(t)-1)}dx=\frac{1}{e_{\lambda}(t)-1}\Big[e^{x(e_{\lambda}(t)-1}\Big]_{0}^{x}\label{37} \\
&=\frac{1}{e_{\lambda}(t)-1}\Big(e^{x(e_{\lambda}(t)-1)}-1\Big)=\frac{1}{e_{\lambda}(t)-1}\sum_{k=1}^{\infty}\mathrm{Bel}_{k,\lambda}(x)\frac{t^{k}}{k!}\nonumber \\
&=\frac{t}{e_{\lambda}(t)-1}\sum_{k=0}^{\infty}\frac{\mathrm{Bel}_{k+1,\lambda}(x)}{k+1}\frac{t^{k}}{k!}=\sum_{l=0}^{\infty}\beta_{l,\lambda}\frac{t^{l}}{l!} \sum_{k=0}^{\infty}\frac{\mathrm{Bel}_{k+1,\lambda}(x)}{k+1}\frac{t^{k}}{k!}\nonumber\\
&=\sum_{n=0}^{\infty}\bigg(\sum_{k=0}^{n}\binom{n}{k}\frac{\mathrm{Bel}_{k+1,\lambda}(x)}{k+1}\beta_{n-k,\lambda}\bigg)\frac{t^{n}}{n!}\nonumber \\
&=\sum_{n=0}^{\infty}\bigg(\frac{1}{n+1}\sum_{k=0}^{n}\binom{n+1}{k+1}\mathrm{Bel}_{k+1,\lambda}(x)\beta_{n-k,\lambda}\bigg)\frac{t^{n}}{n!} \nonumber\\
&=\sum_{n=0}^{\infty}\bigg(\frac{1}{n+1}\sum_{k=1}^{n+1}\binom{n+1}{k}\mathrm{Bel}_{k,\lambda}(x)\beta_{n+1-k,\lambda}\bigg)\frac{t^{n}}{n!}. \nonumber
\end{align}
Therefore, by \eqref{36} and \eqref{37}, we obtain the following theorem.
\begin{theorem}
For $n\ge 0$, we have 
\begin{displaymath}
\int_{0}^{x}\mathrm{Bel}_{n,\lambda}(x)dx= \frac{1}{n+1}\sum_{k=1}^{n+1}\binom{n+1}{k}\beta_{n+1-k,\lambda}\mathrm{Bel}_{k,\lambda}(x),
\end{displaymath}
where $\beta_{n,\lambda}$ are the Carlitz's degenerate Bernoulli numbers given by $\frac{t}{e_{\lambda}(t)-1}=\sum_{n=0}^{\infty}\beta_{n,\lambda}\frac{t^{n}}{n!}$.
\end{theorem}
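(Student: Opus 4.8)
The plan is to integrate the defining generating function \eqref{12} term by term in the spatial variable and then reorganize the outcome by means of the generating function \eqref{3} of the Carlitz degenerate Bernoulli numbers. Writing the integration variable as $u$ to avoid clashing with the upper limit $x$, I would start from
\[
\sum_{n=0}^{\infty}\Big(\int_{0}^{x}\mathrm{Bel}_{n,\lambda}(u)\,du\Big)\frac{t^{n}}{n!}=\int_{0}^{x}e^{u(e_{\lambda}(t)-1)}\,du,
\]
which is just \eqref{12} integrated against $u$. Since $e_{\lambda}(t)-1$ is a constant as far as the $u$-integration is concerned, the right-hand side evaluates in closed form to $\frac{1}{e_{\lambda}(t)-1}\big(e^{x(e_{\lambda}(t)-1)}-1\big)$.

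Next I would expand the numerator back in terms of Bell polynomials. Because $\mathrm{Bel}_{0,\lambda}(x)=1$, the constant term of $e^{x(e_{\lambda}(t)-1)}$ cancels the subtracted $1$, leaving $\sum_{k\ge 1}\mathrm{Bel}_{k,\lambda}(x)\,t^{k}/k!$, a series with no constant term. The key move is then to factor a single power of $t$ out of this series, so that the prefactor $\frac{1}{e_{\lambda}(t)-1}$ is upgraded to $\frac{t}{e_{\lambda}(t)-1}$, which by \eqref{3} is exactly $\sum_{l\ge 0}\beta_{l,\lambda}\,t^{l}/l!$. After reindexing, the shifted Bell series becomes $\sum_{k\ge 0}\frac{\mathrm{Bel}_{k+1,\lambda}(x)}{k+1}\frac{t^{k}}{k!}$.

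At this point the right-hand side is a product of two genuine exponential generating functions, so I would take its Cauchy product and read off the coefficient of $t^{n}/n!$, obtaining $\sum_{k=0}^{n}\binom{n}{k}\frac{\mathrm{Bel}_{k+1,\lambda}(x)}{k+1}\beta_{n-k,\lambda}$. Comparing with the left-hand side yields the formula up to an index shift. The final cosmetic step — and the only place any care is needed — is the binomial rearrangement $\binom{n}{k}\frac{1}{k+1}=\frac{1}{n+1}\binom{n+1}{k+1}$, after which the substitution $k\mapsto k-1$ turns the sum into $\frac{1}{n+1}\sum_{k=1}^{n+1}\binom{n+1}{k}\beta_{n+1-k,\lambda}\mathrm{Bel}_{k,\lambda}(x)$, the claimed identity. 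I expect no genuine obstacle: the argument is a term-by-term integration followed by a product of generating functions, and the one subtlety is purely bookkeeping, namely ensuring that the constant term vanishes so that extracting the factor $t$ is legitimate and the Bernoulli series $\frac{t}{e_{\lambda}(t)-1}$ actually materializes.
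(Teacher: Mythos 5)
Your proposal is correct and follows essentially the same route as the paper: integrate the generating function \eqref{12} in the spatial variable, evaluate the integral in closed form, factor out $t$ so that the Carlitz series $\frac{t}{e_{\lambda}(t)-1}=\sum_{l\ge 0}\beta_{l,\lambda}\frac{t^{l}}{l!}$ appears, take the Cauchy product, and finish with the rearrangement $\binom{n}{k}\frac{1}{k+1}=\frac{1}{n+1}\binom{n+1}{k+1}$. The only (welcome) difference is your use of a separate integration variable $u$, which cleans up the paper's notational conflation of the integration variable with the upper limit $x$.
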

For $k\ge 0$, by \eqref{8}, we get 
\begin{align}
\sum_{n=k}^{\infty}S_{2,\lambda}(n,k)\frac{t^{n}}{n!}&=\frac{1}{k!}\Big(e_{\lambda}(t)-1\Big)^{k}=\frac{1}{k!}\sum_{j=0}^{k}(-1)^{k-j}e_{\lambda}^{j}(t)\binom{k}{j}\label{38} \\
&=\sum_{n=0}^{\infty}\bigg(\frac{1}{k!}\sum_{j=0}^{k}\binom{k}{j}(-1)^{k-j}(j)_{n,\lambda}\bigg)\frac{t^{n}}{n!}.\nonumber
\end{align}
By comparing the coefficients on both sides of \eqref{38}, we have 
\begin{equation}
\frac{1}{k!}\sum_{j=0}^{k}\binom{k}{j}(-1)^{k-j}(j)_{n,\lambda}=\left\{\begin{array}{ccc}
S_{2,\lambda}(n,k), & \textrm{if $n\ge k$,} \\
0, & \textrm{if $0 \le n \le k-1$.}
\end{array}\right.\label{39}	
\end{equation} \par
Let $D=\frac{d}{dx}$, and let $y=x^{p}$. As $ x^{1-\lambda}D=py^{1-\frac{\lambda}{p}}\frac{d}{dy}$, we have
\begin{align}
\big(x^{1-\lambda}D\big)^{n}e^{ax^{p}}&=\bigg(py^{1-\frac{\lambda}{p}}\frac{d}{dy}\bigg)^{n}e^{ay} 
= p^{n}\bigg(y^{1-\frac{\lambda}{p}}\frac{d}{dy}\bigg)^{n}e^{ay} \label{40} \\
&=p^{n}y^{-\frac{n\lambda}{p}}\mathrm{Bel}_{n,\frac{\lambda}{p}}(ay)e^{ay}=p^{n}x^{-n\lambda}\mathrm{Bel}_{n,\frac{\lambda}{p}}(ax^{p})e^{ax^{p}}.\nonumber
\end{align}
Thus, we have 
\begin{equation}
	\Big(x^{1-\lambda}D\Big)^{n}e^{ax^{p}}= p^{n}x^{-n\lambda}\mathrm{Bel}_{n,\frac{\lambda}{p}}(ax^{p})e^{ax^{p}},\quad (n\ge 0). \label{41}
\end{equation}
Therefore, by \eqref{41}, we obtain the following proposition. 
\begin{proposition}
	For $n\ge 0$, we have 
	\begin{displaymath}
		x^{n\lambda}\big(x^{1-\lambda}D\big)^{n}e^{ax^{p}}=p^{n}\mathrm{Bel}_{n,\frac{\lambda}{p}}(ax^{p})e^{ax^{p}},
	\end{displaymath}
	where $D=\frac{d}{dx}$. 
\end{proposition}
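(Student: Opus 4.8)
The plan is to reduce the statement to Theorem 3 (equivalently, to the identity \eqref{22}) by means of the substitution $y = x^{p}$, which is precisely the change of variables that converts the operator $x^{1-\lambda}D$ into one of the same shape but with degeneracy parameter $\lambda/p$.

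First I would record the operator identity. With $y = x^{p}$ the chain rule gives $D = \frac{d}{dx} = px^{p-1}\frac{d}{dy}$, so that $x^{1-\lambda}D = px^{p-\lambda}\frac{d}{dy}$. Writing $x = y^{1/p}$ turns $x^{p-\lambda}$ into $y^{1-\lambda/p}$, whence $x^{1-\lambda}D = py^{1-\lambda/p}\frac{d}{dy}$ as a differential operator. The crucial observation is that this is a genuine identity of operators, not merely of their action on a single function, so composing each side with itself $n$ times yields $(x^{1-\lambda}D)^{n} = \big(py^{1-\lambda/p}\frac{d}{dy}\big)^{n}$. Since $p$ is a constant it commutes with $y^{1-\lambda/p}\frac{d}{dy}$, and the right-hand side factors as $p^{n}\big(y^{1-\lambda/p}\frac{d}{dy}\big)^{n}$.

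Next I would invoke Theorem 3 in the variable $y$ and with $\lambda$ replaced by $\lambda/p$: applying \eqref{22} gives $\big(y^{1-\lambda/p}\frac{d}{dy}\big)^{n}e^{ay} = y^{-n\lambda/p}\mathrm{Bel}_{n,\lambda/p}(ay)e^{ay}$. Combining this with the operator factorization above and then substituting back $y = x^{p}$---so that $y^{-n\lambda/p} = x^{-n\lambda}$, $ay = ax^{p}$, and $e^{ay} = e^{ax^{p}}$---produces $(x^{1-\lambda}D)^{n}e^{ax^{p}} = p^{n}x^{-n\lambda}\mathrm{Bel}_{n,\lambda/p}(ax^{p})e^{ax^{p}}$, which is exactly \eqref{41}. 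Multiplying through by $x^{n\lambda}$ then clears the surviving power of $x$ on the right and yields the claimed formula.

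I expect the only genuinely delicate point to be the justification that $x^{1-\lambda}D$ and $py^{1-\lambda/p}\frac{d}{dy}$ agree as operators, so that the $n$-fold composition may legitimately be computed in the $y$-coordinate; once this is granted, the scalar $p^{n}$ factors out trivially and the remainder is a direct appeal to Theorem 3 followed by back-substitution. A small bookkeeping check worth making is that the substitution is valid on the relevant domain (say $x>0$, where $y=x^{p}$ is invertible) and that the fractional powers $x^{p-\lambda}$ and $y^{1-\lambda/p}$ are interpreted consistently; this is routine but should be noted to make the operator identity rigorous.
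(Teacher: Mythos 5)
Your proposal is correct and follows essentially the same route as the paper's own proof: the substitution $y=x^{p}$ converting $x^{1-\lambda}D$ into $py^{1-\frac{\lambda}{p}}\frac{d}{dy}$, factoring the constant $p^{n}$ out of the $n$-fold composition, applying the known degenerate Bell identity in the $y$-variable, and substituting back. The only small imprecision is that the identity you need for $e^{ay}$ is \eqref{17} (the $a$-parameter version of Theorem 3, established right after Lemma 1) rather than \eqref{22} itself, which is stated only for $a=1$; the paper's proof relies on exactly that same identity, so this is a citation detail rather than a gap.
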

From \eqref{8}, we note that 
\begin{align}
&\sum_{k=0}^{n+1}S_{2,\lambda}(n+1,k)(x)_{k}=(x)_{n+1,\lambda}=(x-n\lambda)(x)_{n,\lambda}\label{42}\\
&=(x-n\lambda)\sum_{k=0}^{n}S_{2,\lambda}(n,k)(x)_{k}=\sum_{k=0}^{n}S_{2,\lambda}(n,k)(x-k+k-n\lambda)(x)_{k}\nonumber \\
&=\sum_{k=0}^{n}S_{2,\lambda}(n,k)(x)_{k+1} +\sum_{k=0}^{n}S_{2,\lambda}(n,k)(k-n\lambda)(x)_{k} \nonumber\\
&=\sum_{k=1}^{n+1}S_{2,\lambda}(n,k-1)(x)_{k}+\sum_{k=0}^{n}S_{2,\lambda}(n,k)(k-n\lambda)(x)_{k}\nonumber \\
&=\sum_{k=0}^{n+1}\Big(S_{2,\lambda}(n,k-1)+S_{2,\lambda}(n,k)(k-n\lambda)\Big)(x)_{k}.\nonumber
\end{align}
By \eqref{42}, we get 
\begin{equation}
S_{2,\lambda}(n+1,k)=S_{2,\lambda}(n,k-1)+(k-n\lambda)S_{2,\lambda}(n,k),\label{43}
\end{equation}
where $0 \le k \le n+1$. \par 
\vspace{0.1in}
We prove the next theorem by induction on $n$.
\begin{theorem}
Assume that $f$ is an infinitely differentiable function. Then, for $n\ge 0$, we have 
\begin{displaymath}
\big(x^{1-\lambda}D\big)^{n}f=\sum_{k=0}^{n}S_{2,\lambda}(n,k)x^{k-n \lambda}D^{k}f, 
\end{displaymath}
where $D=\frac{d}{dx}$. 
\end{theorem}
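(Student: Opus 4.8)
The plan is to argue by induction on $n$, exactly as the paper announces, using the Leibniz product rule for the single derivative $D$ together with the recurrence \eqref{43} for $S_{2,\lambda}(n,k)$ to close the step. For the base case $n=0$ both sides collapse to $f$: the left side is $\big(x^{1-\lambda}D\big)^{0}f=f$, and the right side is the single term $S_{2,\lambda}(0,0)\,x^{0}D^{0}f=f$, since $S_{2,\lambda}(0,0)=1$.

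For the inductive step I would assume the formula at level $n$ and apply one more copy of the operator,
\begin{displaymath}
\big(x^{1-\lambda}D\big)^{n+1}f = x^{1-\lambda}D\bigg(\sum_{k=0}^{n}S_{2,\lambda}(n,k)\,x^{k-n\lambda}D^{k}f\bigg).
\end{displaymath}
Differentiating termwise by the product rule gives $D\big(x^{k-n\lambda}D^{k}f\big)=(k-n\lambda)x^{k-n\lambda-1}D^{k}f+x^{k-n\lambda}D^{k+1}f$, and multiplying through by $x^{1-\lambda}$ produces two families of terms, namely $(k-n\lambda)x^{k-(n+1)\lambda}D^{k}f$ and $x^{(k+1)-(n+1)\lambda}D^{k+1}f$.

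The key bookkeeping step is to reindex the second family by $k\mapsto k-1$, so that both sums carry the common power $x^{k-(n+1)\lambda}$ and the common operator $D^{k}f$. Adopting the convention that $S_{2,\lambda}(n,k)=0$ for $k<0$ or $k>n$ lets me merge the two sums into one over $0\le k\le n+1$, with coefficient
\begin{displaymath}
(k-n\lambda)S_{2,\lambda}(n,k)+S_{2,\lambda}(n,k-1).
\end{displaymath}
By the recurrence \eqref{43} this coefficient is precisely $S_{2,\lambda}(n+1,k)$, which gives the claimed identity at level $n+1$ and completes the induction.

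I do not expect a genuine obstacle here; the only point demanding care is the index shift and the two boundary terms at $k=0$ and $k=n+1$, where one must check that the vanishing convention on $S_{2,\lambda}$ matches the terms actually produced (the $k=0$ term has no predecessor $S_{2,\lambda}(n,-1)$, while the $k=n+1$ term arises solely from the shifted sum). Once the summation ranges are aligned, the result is an immediate consequence of \eqref{43}.
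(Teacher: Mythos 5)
Your proposal is correct and follows essentially the same route as the paper's own proof: induction on $n$, termwise application of the product rule, reindexing the second family of terms, and closing the step with the recurrence \eqref{43} for $S_{2,\lambda}(n+1,k)$. The care you take with the vanishing convention at $k=0$ and $k=n+1$ matches (and makes slightly more explicit) the paper's silent extension of the summation range to $0\le k\le n+1$.
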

\begin{proof}
The statement is obviously true for $n=0$. Assume that it is true for $n, \,\,(n \ge 0)$.
\begin{align*}
&\big(x^{1-\lambda}D\big)^{n+1}f(x)=\big(x^{1-\lambda}D\big)\sum_{k=0}^{n}S_{2,\lambda}(n,k)x^{k-n \lambda}D^{k}f(x) \\
&=x^{1-\lambda}\sum_{k=0}^{n}S_{2,\lambda}(n,k)\bigg\{(k-n \lambda)x^{k-1-n \lambda}D^{k}f(x)+x^{k-n \lambda}D^{k+1}f(x)\bigg\}\\
&=\sum_{k=0}^{n}S_{2,\lambda}(n,k)\bigg\{(k-n \lambda)x^{k-(n+1) \lambda}D^{k}f(x)+x^{k+1-(n+1) \lambda}D^{k+1}f(x)\bigg\}\\
&=\sum_{k=0}^{n+1}S_{2,\lambda}(n,k)(k-n \lambda)x^{k-(n+1) \lambda}D^{k}f(x)
+\sum_{k=0}^{n+1}S_{2,\lambda}(n,k-1)x^{k-(n+1) \lambda}D^{k}f(x) \\
&=\sum_{k=0}^{n+1}\bigg\{S_{2,\lambda}(n,k)(k-n \lambda)+S_{2,\lambda}(n,k-1)\bigg\}x^{k-(n+1) \lambda}D^{k}f(x)\\
&=\sum_{k=0}^{n+1}S_{2,\lambda}(n+1,k)x^{k-(n+1) \lambda}D^{k}f(x).
\end{align*}
\end{proof}
Let $f(x)=e^{x}$. Then we have 
\begin{align*}
x^{n\lambda}\big(x^{1-\lambda}D\big)^{n}e^{x} &=\bigg(\sum_{k=0}^{n}S_{2,\lambda}(n,k)x^{k}\bigg)e^{x}\\
&=\mathrm{Bel}_{n,\lambda}(x)e^{x}.
\end{align*}
Observe that, for any $\alpha$, we have
\begin{equation}
\Big(x^{1-\lambda}D\Big)^{n}x^{\alpha}=(\alpha)_{n,\lambda}x^{\alpha-n\lambda}. \label{44}
\end{equation} \par
By Leibiniz rule, we get 
\begin{equation}
\Big(x^{1-\lambda}D\Big)^{n}(fg)=\sum_{l=0}^{n}\binom{n}{l}\Big[(x^{1-\lambda}D)^{n-l}f\Big]\Big[(x^{1-\lambda}D)^{l}g\Big].\label{45}	
\end{equation}
From Theorem 3, we note that 
\begin{align}
x^{-(n+m)\lambda}e^{x}\mathrm{Bel}_{n+m,\lambda}(x)&=\Big(x^{1-\lambda}D\Big)^{n+m}e^{x}=\Big(x^{1-\lambda}D\Big)^{n} \Big(x^{1-\lambda}D\Big)^{m}e^{x}\label{46} \\
&= \Big(x^{1-\lambda}D\Big)^{n}\Big(x^{-m\lambda}\mathrm{Bel}_{m,\lambda}(x)e^{x}\Big). \nonumber
\end{align}
By \eqref{45} and \eqref{46}, we get 
\begin{align}
&x^{-(n+m)\lambda}e^{x}\mathrm{Bel}_{n+m,\lambda}(x)
=\Big(x^{1-\lambda}D\Big)^{n}\Big(x^{-m\lambda}\mathrm{Bel}_{m,\lambda}e^{x}\Big)\label{47}\\
&=\sum_{k=0}^{n}\binom{n}{k}\Big[(x^{1-\lambda}D)^{n-k}(x^{-m\lambda}\mathrm{Bel}_{m,\lambda}(x))\Big]\Big[(x^{1-\lambda}D)^{k}e^{x}\Big] \nonumber \\
&=\sum_{k=0}^{n}\binom{n}{k}x^{-k\lambda}\mathrm{Bel}_{k,\lambda}(x)e^{x}\Big[(x^{1-\lambda}D)^{n-k}(x^{-m\lambda}\mathrm{Bel}_{m,\lambda}(x))\Big].\nonumber
\end{align}
On the other hand, 
\begin{align}
&\Big(x^{1-\lambda}D\Big)^{n-k}\Big(x^{-m\lambda}\mathrm{Bel}_{m,\lambda}(x)\Big)=\sum_{j=0}^{m}S_{2,\lambda}(m,j)\Big[(x^{1-\lambda}D)^{n-k}x^{j-m\lambda}\Big]\label{48}	\\
&=\sum_{j=0}^{m}S_{2,\lambda}(m,j)(j-m\lambda)_{n-k,\lambda}x^{j-m\lambda-(n-k)\lambda}=\sum_{j=0}^{m}S_{2,\lambda}(m,j)(j-m\lambda)_{n-k,\lambda}x^{j-(m+n)\lambda+k\lambda} \nonumber\\
&=\sum_{j=0}^{m}S_{2,\lambda}(m,j)\frac{(j)_{m+n-k,\lambda}}{(j)_{m,\lambda}}x^{j-(m+n)\lambda+k\lambda}. \nonumber
\end{align}
By \eqref{47} and \eqref{48}, we get 
\begin{align}
&x^{-(n+m)\lambda}e^{x}\mathrm{Bel}_{n+m,\lambda}(x)\label{49} \\
&=\sum_{k=0}^{n}\binom{n}{k}x^{-k\lambda}\mathrm{Bel}_{k,\lambda}(x)e^{x}\sum_{j=0}^{m}S_{2,\lambda}(m,j)\frac{(j)_{m+n-k,\lambda}}{(j)_{m,\lambda}}x^{j-(m+n)\lambda+k\lambda} \nonumber \\
&=x^{-(m+n)\lambda}e^{x}\sum_{k=0}^{n}\sum_{j=0}^{m}\binom{n}{k}S_{2,\lambda}(m,j)\mathrm{Bel}_{k,\lambda}(x) \frac{(j)_{m+n-k,\lambda}}{(j)_{m,\lambda}}x^{j}\nonumber.
\end{align}
Therefore, by comparing the coefficients on both sides of \eqref{49}, we obtain the following theorem. 
\begin{theorem}
For $m,n\ge 0$, we have 
\begin{equation}
\mathrm{Bel}_{n+m,\lambda}(x)= \sum_{k=0}^{n}\sum_{j=0}^{m}\binom{n}{k}S_{2,\lambda}(m,j)\mathrm{Bel}_{k,\lambda}(x) \frac{(j)_{m+n-k,\lambda}}{(j)_{m,\lambda}}x^{j}.\label{50}
\end{equation}
\end{theorem}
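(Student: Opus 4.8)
The plan is to exploit the semigroup property of the operator $\Theta:=x^{1-\lambda}D$ together with the evaluation of $\Theta^{n}e^{x}$ furnished by Theorem 3. Writing $\Theta^{n+m}e^{x}=\Theta^{n}\big(\Theta^{m}e^{x}\big)$, I would evaluate the two sides in two different ways. Applying Theorem 3 directly gives $\Theta^{n+m}e^{x}=x^{-(n+m)\lambda}\mathrm{Bel}_{n+m,\lambda}(x)e^{x}$, which produces the Bell polynomial of order $n+m$ that I want to isolate. Applying Theorem 3 to the inner operator instead rewrites $\Theta^{m}e^{x}=x^{-m\lambda}\mathrm{Bel}_{m,\lambda}(x)e^{x}$, so the task reduces to computing $\Theta^{n}$ applied to the product $\big(x^{-m\lambda}\mathrm{Bel}_{m,\lambda}(x)\big)e^{x}$.

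For this I would use the Leibniz rule \eqref{45} for $\Theta$, which splits $\Theta^{n}$ across the two factors. The exponential factor is again handled by Theorem 3 via $\Theta^{k}e^{x}=x^{-k\lambda}\mathrm{Bel}_{k,\lambda}(x)e^{x}$, leaving only $\Theta^{n-k}\big(x^{-m\lambda}\mathrm{Bel}_{m,\lambda}(x)\big)$ to be determined. Expanding $\mathrm{Bel}_{m,\lambda}(x)=\sum_{j=0}^{m}S_{2,\lambda}(m,j)x^{j}$ by \eqref{14} turns this into a linear combination of the terms $\Theta^{n-k}x^{j-m\lambda}$, each of which is evaluated by the monomial formula \eqref{44} as $(j-m\lambda)_{n-k,\lambda}\,x^{j-m\lambda-(n-k)\lambda}$.

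The step I expect to carry the real content is the algebraic identity that converts the shifted falling factorial $(j-m\lambda)_{n-k,\lambda}$ into a ratio based at $j$. Since $(j-m\lambda)_{n-k,\lambda}=(j-m\lambda)(j-(m+1)\lambda)\cdots\big(j-(m+n-k-1)\lambda\big)$ is precisely what remains of $(j)_{m+n-k,\lambda}$ after deleting its first $m$ factors, one obtains $(j-m\lambda)_{n-k,\lambda}=(j)_{m+n-k,\lambda}/(j)_{m,\lambda}$. Recognizing this ratio is what lets every term be expressed with the single uniform base $j$ and is the crux of matching the stated form; the apparent $0/0$ at $j=0$ is harmless, since $S_{2,\lambda}(m,0)=0$ whenever $m\ge 1$.

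With these substitutions, each summand acquires the common factor $x^{-(m+n)\lambda}e^{x}$, exactly the prefactor attached to $\mathrm{Bel}_{n+m,\lambda}(x)$ coming from the left-hand evaluation. Cancelling this factor throughout and comparing the resulting polynomial coefficients in $x$ then yields the claimed double-sum identity, completing the argument.
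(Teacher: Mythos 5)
Your proposal is correct and follows essentially the same route as the paper's own proof: both decompose $\big(x^{1-\lambda}D\big)^{n+m}e^{x}=\big(x^{1-\lambda}D\big)^{n}\big(x^{1-\lambda}D\big)^{m}e^{x}$, apply Theorem 3 to both evaluations, split via the Leibniz rule \eqref{45}, expand $\mathrm{Bel}_{m,\lambda}(x)$ by \eqref{14}, evaluate monomials by \eqref{44}, and convert $(j-m\lambda)_{n-k,\lambda}$ into the ratio $(j)_{m+n-k,\lambda}/(j)_{m,\lambda}$ before cancelling the common factor $x^{-(m+n)\lambda}e^{x}$. Your observation that the apparent $0/0$ at $j=0$ is harmless because $S_{2,\lambda}(m,0)=0$ for $m\ge 1$ is a small point of care that the paper passes over silently.
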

\noindent Taking $x=1$ in \eqref{50}, we have 
\begin{equation}
\mathrm{Bel}_{n+m,\lambda}=\sum_{k=0}^{n}\sum_{j=0}^{m}\binom{n}{k}S_{2,j}(m,j)\mathrm{Bel}_{k,\lambda}\frac{(j)_{n+m-k,\lambda}}{(j)_{m,\lambda}}. \label{51}
\end{equation} \par
From \eqref{14} and \eqref{44}, we note that 
\begin{align}
\big(x^{1-\lambda}D\big)^{n}\mathrm{Bel}_{m,\lambda}(x)&=\Big(x^{1-\lambda}D\Big)^{n}\sum_{k=0}^{m}S_{2,\lambda}(m,k)x^{k} \label{52} \\
&=\sum_{k=0}^{m}S_{2,\lambda}(m,k)(k)_{n,\lambda}x^{k-n\lambda}. \nonumber	
\end{align}
On the other hand, by Leibniz rule \eqref{45} and Theorem 3, we get 
\begin{align}
&\Big(x^{1-\lambda}D\Big)^{n}\mathrm{Bel}_{m,\lambda}(x)=	\Big(x^{1-\lambda}D\Big)^{n}\Big[(e^{-x}x^{m\lambda})(\mathrm{Bel}_{m,\lambda}(x)e^{x}x^{-m\lambda})\Big]\label{53} \\
&=\sum_{k=0}^{n}\binom{n}{k}\Big[(x^{1-\lambda}D)^{n-k}(x^{m\lambda}e^{-x})\Big]\Big[(x^{1-\lambda}D)^{k}\Big(\mathrm{Bel}_{m,\lambda}(x)e^{x}x^{-m\lambda}\Big)\Big]\nonumber \\
&=\sum_{k=0}^{n}\binom{n}{k}\Big[(x^{1-\lambda}D)^{n-k}(x^{m\lambda}e^{-x})\Big]\Big[(x^{1-\lambda}D)^{m+k}e^{x}\Big] \nonumber \\
&=\sum_{k=0}^{n}\binom{n}{k}\Big[(x^{1-\lambda}D)^{n-k}(x^{m\lambda}e^{-x})\Big]x^{-(m+k)\lambda}\mathrm{Bel}_{m+k,\lambda}(x)e^{x}.\nonumber
\end{align}
By \eqref{44}, \eqref{45} and Theorem 3, we easily get 
\begin{align}
(x^{1-\lambda}D)^{n-k}(x^{m\lambda}e^{-x})&=\sum_{j=0}^{n-k}\binom{n-k}{j}\Big[(x^{1-\lambda}D)^{j}x^{m\lambda}\Big]\Big[(x^{1-\lambda}D)^{n-k-j}e^{-x}\Big] \label{54} \\
&=\sum_{j=0}^{n-k}\binom{n-k}{j}(m\lambda)_{j,\lambda}x^{m\lambda-j\lambda}e^{-x}\mathrm{Bel}_{n+k-j,\lambda}(-x)x^{-(n-k-j)\lambda}\nonumber \\
&=\sum_{j=0}^{n-k}\binom{n-k}{j}(m\lambda)_{j,\lambda}x^{m\lambda-n\lambda+k\lambda}\mathrm{Bel}_{n-k-j,\lambda}(-x)e^{-x}.\nonumber	
\end{align}
From \eqref{53} and \eqref{54}, we have 
\begin{align}
&\Big(x^{1-\lambda}D\Big)^{n}\mathrm{Bel}_{m,\lambda}(x)\label{55}\\
&=\sum_{k=0}^{n}\binom{n}{k}x^{-m\lambda-k\lambda}\mathrm{Bel}_{m+k,\lambda}(x)e^{x}\sum_{j=0}^{n-k}\binom{n-k}{j}(m\lambda)_{j,\lambda}x^{m\lambda-n\lambda+k\lambda}\mathrm{Bel}_{n-k-j,\lambda}(-x)e^{-x}\nonumber\\
&=\sum_{k=0}^{n}\sum_{j=0}^{n-k}\binom{n}{k}\binom{n-k}{j}\mathrm{Bel}_{m+k,\lambda}(x)\mathrm{Bel}_{n-k-j,\lambda}(-x)(m\lambda)_{j,\lambda}x^{-n\lambda}.\nonumber
\end{align}
Therefore, by \eqref{52} and \eqref{55}, we obtain the following theorem. 
\begin{theorem}
For $m,n\ge 0$, we have 
\begin{displaymath}
\sum_{k=0}^{m}S_{2,\lambda}(m,k)(k)_{n,\lambda}x^{k} =\sum_{k=0}^{n}\sum_{j=0}^{n-k}\binom{n}{k}\binom{n-k}{j}\mathrm{Bel}_{m+k,\lambda}(x)\mathrm{Bel}_{n-k-j,\lambda}(-x)(m\lambda)_{j,\lambda}.
\end{displaymath}	
\end{theorem}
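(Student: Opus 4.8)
The plan is to compute the single quantity $\big(x^{1-\lambda}D\big)^{n}\mathrm{Bel}_{m,\lambda}(x)$ in two genuinely different ways and equate the outcomes; both evaluations will carry a common factor $x^{-n\lambda}$, and cancelling it produces the asserted identity exactly.

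First I would evaluate the operator directly. Writing $\mathrm{Bel}_{m,\lambda}(x)=\sum_{k=0}^{m}S_{2,\lambda}(m,k)x^{k}$ via \eqref{14} and applying the monomial rule \eqref{44}, namely $\big(x^{1-\lambda}D\big)^{n}x^{\alpha}=(\alpha)_{n,\lambda}x^{\alpha-n\lambda}$, termwise gives $\sum_{k=0}^{m}S_{2,\lambda}(m,k)(k)_{n,\lambda}x^{k-n\lambda}$, as recorded in \eqref{52}. This is precisely $x^{-n\lambda}$ times the left-hand side of the theorem.

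The second evaluation carries the real content. I would factor $\mathrm{Bel}_{m,\lambda}(x)=\big(x^{m\lambda}e^{-x}\big)\big(x^{-m\lambda}\mathrm{Bel}_{m,\lambda}(x)e^{x}\big)$ and apply the Leibniz rule \eqref{45} for the operator $x^{1-\lambda}D$. The second factor is exactly $\big(x^{1-\lambda}D\big)^{m}e^{x}$ by Theorem 3, so applying $\big(x^{1-\lambda}D\big)^{k}$ to it returns $\big(x^{1-\lambda}D\big)^{m+k}e^{x}=x^{-(m+k)\lambda}\mathrm{Bel}_{m+k,\lambda}(x)e^{x}$. The first factor requires a second, nested use of \eqref{45} on the product $x^{m\lambda}\cdot e^{-x}$: the polynomial part is handled by \eqref{44} as $\big(x^{1-\lambda}D\big)^{j}x^{m\lambda}=(m\lambda)_{j,\lambda}x^{m\lambda-j\lambda}$, while the exponential part needs the general formula \eqref{17} specialized to $a=-1$, giving $\big(x^{1-\lambda}D\big)^{n-k-j}e^{-x}=x^{-(n-k-j)\lambda}\mathrm{Bel}_{n-k-j,\lambda}(-x)e^{-x}$. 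Summing over $0\le k\le n$ and $0\le j\le n-k$ then reproduces the double sum of \eqref{55}.

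The hard part will be the exponent bookkeeping rather than any conceptual step. Each intermediate piece contributes its own power of $x$ whose exponent is a multiple of $\lambda$, and one must verify that across the two layers of the Leibniz expansion these combine as $x^{-m\lambda-k\lambda}\cdot x^{m\lambda-n\lambda+k\lambda}=x^{-n\lambda}$, while the $e^{x}$ from the second factor cancels the $e^{-x}$ from the first. Two points deserve care: recognizing that $e^{-x}$ must be differentiated through the $a=-1$ instance of \eqref{17}, which is exactly what introduces the reflected argument $\mathrm{Bel}_{n-k-j,\lambda}(-x)$; and correctly accumulating the binomial weights $\binom{n}{k}\binom{n-k}{j}$ generated by the nested rule. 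Once both evaluations are seen to equal a common multiple of $x^{-n\lambda}$, cancelling that factor and matching the two expressions yields the stated identity.
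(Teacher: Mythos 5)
Your proposal is correct and follows essentially the same route as the paper's own proof: the paper likewise equates the termwise evaluation \eqref{52} of $\big(x^{1-\lambda}D\big)^{n}\mathrm{Bel}_{m,\lambda}(x)$ with a nested Leibniz expansion of the factorization $\big(x^{m\lambda}e^{-x}\big)\big(x^{-m\lambda}\mathrm{Bel}_{m,\lambda}(x)e^{x}\big)$, using Theorem 3 to turn the second factor into $\big(x^{1-\lambda}D\big)^{m+k}e^{x}$ and \eqref{17} with $a=-1$ to produce $\mathrm{Bel}_{n-k-j,\lambda}(-x)$, before cancelling the common factor $x^{-n\lambda}$. Your exponent bookkeeping matches \eqref{53}--\eqref{55} exactly, so there is nothing to add.
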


By \eqref{7} and \eqref{9}, we easily get 
\begin{equation}
(-1)^{n-k}S_{1,\lambda}(n,k)={n \brack k}_{\lambda},\quad(0 \le k \le n).\label{56}	
\end{equation}
Indeed, 
\begin{align}
\sum_{n=0}^{\infty}\langle x\rangle_{n}\frac{t^{n}}{n!}&=\bigg(\frac{1}{1-t}\bigg)^{x}=e_{\lambda}^{-x}\big(\log_{\lambda}(1-t)\big) \nonumber \\
&=\sum_{k=0}^{\infty}(-x)_{k,\lambda}\frac{1}{k!}\big(\log_{\lambda}(1-t)\big)^{k} \label{57}\\
&=\sum_{k=0}^{\infty}(-1)^{k}\langle x\rangle_{k,\lambda}\sum_{n=k}^{\infty}S_{1,\lambda}(n,k)\frac{(-t)^{n}}{n!} \nonumber  \\
&=\sum_{n=0}^{\infty}\bigg(\sum_{k=0}^{n}(-1)^{n-k}S_{1,\lambda}(n,k)\langle x\rangle_{k,\lambda}\bigg)\frac{t^{n}}{n!}.\nonumber
\end{align}
Therefore, by \eqref{57}, we get 
\begin{equation}
\langle x\rangle_{n}=\sum_{k=0}^{n}(-1)^{n-k}S_{1,\lambda}(n,k)\langle x\rangle_{k,\lambda}.\label{58}
\end{equation} \par
Replacing $t$ by $\log_{\lambda}(1+t)$ in \eqref{12}, we get 
\begin{align}
e^{xt}&=\sum_{k=0}^{\infty}\mathrm{Bel}_{k,\lambda}(x)\frac{1}{k!}\big(\log_{\lambda}(1+t)\big)^{k} \label{59}\\
&=\sum_{k=0}^{\infty}\mathrm{Bel}_{k,\lambda}(x)\sum_{n=k}^{\infty}S_{1,\lambda}(n,k)\frac{t^{n}}{n!}\nonumber \\
&=\sum_{n=0}^{\infty}\bigg(\sum_{k=0}^{n}\mathrm{Bel}_{k,\lambda}(x)S_{1,\lambda}(n,k)\bigg)\frac{t^{n}}{n!}.\nonumber
\end{align}
Thus, from \eqref{56} and \eqref{59}, we get 
 \begin{equation}
 x^{n}=\sum_{k=0}^{n}\mathrm{Bel}_{k,\lambda}(x)(-1)^{n-k}{n \brack k}_{\lambda}. \label{60}
  \end{equation} \par
From \eqref{9}, we note that 
\begin{align}
\sum_{k=0}^{n+1}{n+1 \brack k}_{\lambda}\langle x\rangle_{k,\lambda}&=\langle x\rangle_{n+1}=(x+n)\langle x\rangle_{n}=(x+n)\sum_{k=0}^{n}{n \brack k}_{\lambda}\langle x\rangle_{k,\lambda}\label{61} \\
&=\sum_{k=0}^{n}{n \brack k}_{\lambda}(x+k\lambda+n-k\lambda)\langle x\rangle_{k,\lambda}\nonumber\\
&=\sum_{k=0}^{n}{n \brack k}_{\lambda}\langle x\rangle_{k+1,\lambda}+\sum_{k=0}^{n}(n-k\lambda){n \brack k}_{\lambda}\langle x\rangle_{k,\lambda}\nonumber\\
&=\sum_{k=0}^{n+1}{n \brack k-1}_{\lambda}\langle x\rangle_{k,\lambda}+\sum_{k=0}^{n+1}(n-k\lambda){n \brack k}_{\lambda}\langle x\rangle_{k,\lambda}\nonumber \\
&=\sum_{k=0}^{n+1}\bigg({n \brack k-1}_{\lambda}+(n-k\lambda){n \brack k}_{\lambda}\bigg)\langle x\rangle_{k,\lambda}. \nonumber 
\end{align}
Thus, by comparing the coefficients on both sides of \eqref{61}, we get 
\begin{displaymath}
{n+1 \brack k}_{\lambda}= {n \brack k-1}_{\lambda}+(n-k\lambda){n \brack k}_{\lambda},\quad (0 \le k \le n+1). 	
\end{displaymath}

\section{Conclusion}

Here we studied by means of generating functions the degenerate Bell polynomials which are degenerate versions of the Bell polynomials. In more detail, we derived recurrence relations for degenerate Bell polynomials (see Theorems 2, 4, 5, 12), expressions for them that can be derived from repeated applications of certain operators to the exponential functions (see Theorem 3, Proposition 10), the derivatives of them (Corollary 7), the antiderivatives of them (see Theorem 9), and some identities involving them (see Theorems 8. 13). \par
As one of our future projects, we would like to continue to study degenerate versions of certain special polynomials and numbers and their applications to physics, science and engineering as well as to mathematics.

\end{document}